\xpatchcmd{\@todo}{\setkeys{todonotes}{#1}}{\setkeys{todonotes}{inline,#1}}{}{}
\newtheorem{thm}{Theorem}[section]
\newtheorem{lem}[thm]{Lemma}
\newtheorem{cor}[thm]{Corollary}
\newtheorem{ex}[thm]{Example}
\renewcommand{\le}{\leqslant} 
\renewcommand{\ge}{\geqslant}
\newcommand{\ra}{\rangle}
\newcommand{\la}{\langle}
\newcommand{\ind}{\mathds{1}}
\newcommand{\eps}{\varepsilon}
\newcommand{\norm}[1]{\left\Vert#1\right\Vert}
\newcommand{\abs}[1]{\left\vert#1\right\vert}
\newcommand{\ie}{\emph{i.e.,}}
\let\ga=\alpha \let\gb=\beta   
     \let\gl=\lambda       \let\gn=\nu    \let\gs=\sigma  
 \let\gD=\Delta  \let\gL=\Lambda 
         \let\gS=\Sigma  
\newcommand{\cC}{\mathcal{C}}
\newcommand{\cE}{\mathcal{E}}
\newcommand{\cP}{\mathcal{P}}
\newcommand{\cV}{\mathcal{V}}
\newcommand{\vR}{\mathbf{R}}\newcommand{\vS}{\mathbf{S}}
\newcommand{\vx}{\mathbf{x}}
\newcommand{\mv}[1]{\boldsymbol{#1}}
\newcommand{\mvo}{\boldsymbol{o}}
\newcommand{\mvq}{\boldsymbol{q}}
\newcommand{\mvu}{\boldsymbol{u}}\newcommand{\mvv}{\boldsymbol{v}}
\newcommand{\mvx}{\boldsymbol{x}}
\newcommand{\mvgs}{\boldsymbol{\sigma}}
\newcommand{\bN}{\mathbb{N}}
\newcommand{\bR}{\mathbb{R}}
\newcommand{\sS}{\mathscr{S}}
\DeclareMathOperator{\E}{\mathds{E}}
\DeclareMathOperator{\diag}{diag}
\DeclareMathOperator{\N}{N}
\DeclareMathOperator{\av}{Av}
\newcommand{\pd}{\emph{positive semi-definite}}
\newcommand{\half}{\sfrac12}
\newcommand{\T}{\intercal}
\begin{document}

\title[TAP equations for MSK model]{Thouless-Anderson-Palmer equations for the Multi-species Sherrington-Kirkpatrick model}

\author[Wu]{Qiang Wu$^\dagger$}
\address{Department of Mathematics, University of Illinois at Urbana-Champaign, 1409 W Green Street, Urbana, Illinois 61801}
\email{ $^\dagger$qiangwu2@illinois.edu}
\date{\today}
\subjclass[2020]{Primary: 82B05, 82B44, 60F25.}
\keywords{Spin glass, TAP equations, Multi-species model}
\begin{abstract}
	We prove the Thouless-Anderson-Palmer (TAP) equations for the local magnetization in the multi-species Sherrington-Kirkpatrick (MSK) spin glass model. One of the key ingredients is based on concentration results established in~\cite{DW21a}. The equations hold at high temperature for general MSK model without \pd~assumption on the variance profile matrix $\mathbf{\gD}^2$. 
\end{abstract}

\maketitle

\section{Introduction and Main results}

The multi-species Sherrington-Kirkpatrick (MSK) model~\cite{BCMT15} is a non-homogeneous variant of the classical Sherrington-Kirkpatrick (SK) spin glass model~\cite{SK72}. In the SK model, the interactions among spins are characterized by a family of i.i.d. random variables. While in the MSK model, the spins are divided into different types, the interactions inside a particular type and among different types are now different. Due to this multi-type structure, the MSK model has also been used to study the behavior of systems with multiple interacting components, such as neural networks or protein interactions. Although a small change of the classical SK model, MSK model already exhibits some interesting and unique features, especially in the non-convex case, the classical Parisi formula~\cite{Tal06,Pan13} for computing the limiting free energy still remains unknown in the non-convex MSK case. We first state the definition of the MSK model as follows.

\subsection{MSK model}\label{s1sec:msk}
 For a spin configuration on the $N$-dimensional hyper-cube, $\mvgs =( \gs_1, \gs_2, \ldots, \gs_N) \in \gS_N := \{-1,+1\}^N$, the Hamiltonian of MSK model is given by 
\begin{align}\label{hamilton}
	H_N(\mvgs) := \frac{\gb}{\sqrt{N}}\sum_{1\le i<j\le N} g_{i,j} \gs_i \gs_j+ h \sum_{i=1}^N \gs_i
\end{align}
where $\{g_{i,j}\}$, the disorder interaction parameters given as independent centered Gaussian random variables, $\gb>0$ is the inverse temperature and $h \ge 0$ is the external field. In the classical SK model, the variance structure of disorder $g_{i,j}$ is homogeneous, usually taken as $g_{i,j} \sim \N(0,1)$ i.i.d. While in the MSK model, the variance of $g_{i,j}$ depends on the structure of species among the $N$ spins.

We use $\sS$ to denote the set of species or types. Assume that, there are $\abs{\sS} =m \ge  2$ species. It is clear that in the case $\abs{\sS}=1$, MSK model reduces to the classical SK model. We partition the set of spins into $m$ disjoint sets, namely,
\[
	I = \bigcup_{s\in \sS} I_s = \{1,2,\ldots, N\},\qquad I_{s}\cap I_{t}=\emptyset\text{ for } s\neq t.
\]
For $i \in I_s, j \in I_t$, we assume
\[ \E g_{i,j}^2 = \gD_{s,t}^2 ,\]
the non-homogeneous interactions of MSK model is encoded in the \emph{variance profile matrix} $\mathbf{\gD^2}:=(\gD_{s,t}^2)_{s,t=1}^m$. Besides that, we assume that the ratio of spins in each species is fixed asymptotically, \ie~for $s\in \sS$ and
\[ \gl_{s,N}:={|I_s|}/{N},\]
we have
\[ \lim_{N\to \infty} \gl_{s,N} = \gl_s \in (0,1). \]
Since $\gl_{s,N}$ and $\gl_s$ are asymptotically the same, for the rest of the article we will use $\gl_s$ instead of $\gl_{s,N}$ for convenience. We denote $\gL:=\diag(\gl_1,\gl_2,\ldots, \gl_m)$.

The overlap vector between two replicas $\mvgs^1, \mvgs^2 \in \gS_N$ is given by
\[ \vR_{12} = (R_{12}^{(1)},R_{12}^{(2)},\ldots,R_{12}^{(m)})^{\T} ,\]
where $$R_{12}^{(s)} := \frac{1}{|I_s|} \sum_{i \in I_s} \gs_i^1 \gs_i^2.$$ is the overlap restricted to species $s \in \{1, \ldots, m\}$. In some cases we write it as $R^{(s)}$ for short if there are only 2 replicas involved. All vectors will be considered as a column vector in the rest of the article.

A central question in spin glass theory is to understand the free energy
\begin{align}
	F_N(\gb,h) := \frac{1}{N} \log Z_N(\gb,h), \quad \text{where} \quad Z_N(\gb,h) := \sum_{\mvgs \in \gS_N} \exp(H_N(\mvgs))
\end{align}
is the partition function, and the associated Gibbs measure is given by
\begin{align}
	G_{N}(\mvgs) = {\exp(H_N(\mvgs))} \cdot Z_N(\gb,h)^{-1},\quad \text{ for $\mvgs\in \gS_{N}$}.
\end{align}
Later we will also use $\la \cdot \ra$ to denote the Gibbs mean just for convenience. 

Under the assumption $\mathbf{\gD}^2$ is \pd\footnote{Some literature also refers the MSK model with positive semi-definite $\mathbf{\gD}^2$ as convex model. If $\mathbf{\gD}^2$ is indefinite, it is also known as non-convex MSK model.}, the Parisi formula for the limiting free energy at all $(\gb,h)$ was established in~\cite{Pan15}. However, it remains a notable challenge to compute the limiting free energy for MSK model with general {\it indefinite} $\mathbf{\gD}^2$. Some minimax variational form was conjectured in~\cite{BGG11}, and this variational form was established in some special cases of MSK model, such as deep Boltzman machine~\cite{Gen23}. Some progresses have been made on the fluctuation results~\cite{DW21a} and replica symmetry breaking in low temperature regime~\cite{BSS19} for MSK model. In the spherical version of multi-species model, Bates and Sohn~\cite{BS22a,BS22b} established the Crisanti-Sommers formula in the multi-species spherical mixed $p$-spin model under the \pd~assumption and further identify a sufficient condition for the sychronization of replica symmetry breaking. Shortly after that, Subag~\cite{Sub23,Subag21T} developed the Thouless-Anderson-Palmer (TAP) approach for multi-species spherical pure $p$-spin model and was able to compute the limiting free energy for both {\it indefinite} and \pd~multi-species pure $p$-spin models. Besides that, McKenna~\cite{Mc23} computed the complexity of the bipartite model, as a special case of non-convex multi-species model, and later Kivimae~\cite{kiv22} further proved the concentration of complexity and use it compute the ground state energy. 

\subsection{TAP equations}
 We briefly discuss the related works on the TAP equations in spin glass theory. TAP equations are a system of consistent equations that was first derived by Thouless, Anderson and Palmer~\cite{TAP77} in 1977. It turns out that the solutions of those equations include fruitful information about the spin glass model, and it was originally derived to understand the free energy of SK model. Later those equations were proved mathematically rigorous in several different approaches by Talagrand~\cite{Tal11a}, Chatterjee~\cite{Cha10} and Adhikari et.al ~\cite{ABVY21} in the high temperature regime. Recently this was also generalized to the low temperature regime~\cite{AJ19} by utilizing the ultrametric structure of Gibbs measure. In~\cite{Bol14}, Bolthausen initiated a TAP iteration scheme to solve the TAP equations. This iteration scheme is also fundamental related to the Approximate message passing algorithms~\cite{JM13,BM11}. There are extensive amount of works concerning the TAP equations, we will not give a comprehensive review of all the works and refer interested readers to~\cite{CT22,CT21} and references therein instead. Due to the fundamental role of TAP equations, in this note, we aim to investigate the Thouless-Anderson-Palmer (TAP) equation for general (both convex and nonconvex) MSK model.

\subsection{Main results}

One key ingredient in the proof of the main results is overlap concentration. In MSK model, one has to deal with overlap vector due to the inhomogeneous species structure. In particular, it is expected that 
\[
R_{12}^{(s)} \approx q_s, \quad \text{for $s \in \sS$}. 
\]
where $\mvq = (q_1,\ldots, q_m)$ is the solution to the following system of equations, 
\begin{align}\label{syseq}
		q_s = \E \tanh^2(\gb \eta \sqrt{(\gD^2\gL \mvq)_s}+h),\qquad s=1,2,\ldots, m,
	\end{align}
where $\eta\sim N(0,1)$. The uniqueness of the solution in some high temperature regime was proved in~\cite{DW21a}. We first recall the critical temperature $\gb_c$ in~\cite{DW21a}. 
Define
\begin{align}\label{betac}
	\gb_{c}:=\rho(\gD^2 \gL)^{-\half}.
\end{align}
where $\rho(A)$ is the \emph{spectral radius} or the largest absolute value of the eigenvalues of $A$. In general, one has $\rho(A) \le \norm{A}$. But it is easy to check that for symmetric $A$, $\rho(A)=\norm{A}$.

The overlap concentration results are formally stated in the following theorem.

\begin{thm}[\cite{DW21a}*{Theorem 1.7}]\label{thm1}
	Assume that $\gb < \gb_0:={\gb_c}/{\sqrt{4\ga}}$, where $\ga =\ga (\gD^{2}):= 1+\ind\{\gD^2 \text{ is indefinite}\}$. For $2\gamma<\gb_c^2-4\ga \gb^2 $, we have
	\begin{align*}
		\gn(\exp(\gamma N\cP(\vR_{12} -\mvq)))\le \det(I- (2\gamma+4\ga \gb^2 ) \cV )^{-1/2}
	\end{align*}
	where
	\begin{align*}
		\cP(\vx):= \vx^{\T}\gL^{\half}\cV\gL^{\half}\vx \quad \text{with} \ \cV:=\abs{\gL^{\half}\gD^2\gL^{\half}}.
	\end{align*}
\end{thm}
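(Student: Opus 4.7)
The plan is to follow a Gaussian smart-path interpolation in the style of Guerra--Toninelli and Talagrand, adapted here to accommodate both the species structure and the possibly-indefinite variance profile. For $t\in[0,1]$, introduce the interpolating Hamiltonian
\[
H_{N,t}(\mvgs) = \sqrt{t}\,H_N(\mvgs) + \sqrt{1-t}\sum_{s\in\sS}\sum_{i\in I_s} Y_i^{(s)}\gs_i + h\sum_i \gs_i,
\]
where $\{Y_i^{(s)}\}$ are independent centered Gaussians with variance $\gb^2(\gD^2\gL\mvq)_s$ for $i\in I_s$. This choice is made so that the fixed-point equation~\eqref{syseq} forces the $t=0$ product Gibbs measure to have species-$s$ marginal magnetization squared equal to $q_s$. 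Let $\gn_t$ denote the corresponding two-replica Gibbs expectation and set $\varphi(t):=\gn_t(\exp(\gamma N\cP(\vR_{12}-\mvq)))$.

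The heart of the argument is to compute $\varphi'(t)$ and show $\varphi'(t)\le 0$. Apply Gaussian integration by parts twice: once to the MSK disorders $\{g_{ij}\}$, which produces the bilinear form $\gb^2\vR_{ab}^{\T}\gD^2\gL\vR_{ab}$ for each replica pair $(a,b)$ weighted by the standard replica-doubling signs; and once to the cavity fields $\{Y_i^{(s)}\}$, which produces its linear-in-$\mvq$ counterpart $\gb^2\mvq^{\T}\gD^2\gL\vR_{ab}$. Combining and symmetrizing, the derivative reduces to expectations of quadratic forms in the \emph{centered} overlaps $(\vR_{ab}-\mvq)$ against the tilted Gibbs measure, weighted by $\gL^{\half}\gD^2\gL^{\half}$. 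In the positive semi-definite case this form is sign-definite; in the indefinite case one decomposes $\gL^{\half}\gD^2\gL^{\half}=P_+-P_-$ and bounds each part separately against $\cV=P_++P_-$, which is exactly what produces the factor $\ga=2$ versus $\ga=1$.

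To close the inequality, linearize the tilt via Hubbard--Stratonovich,
\[
\exp(\gamma N\cP(\vx)) = \E_{\vZ\sim N(0,\cV)}\exp\bigl(\sqrt{2\gamma N}\,\vZ^{\T}\gL^{\half}\vx\bigr),
\]
so that the tilt becomes a linear shift in the cavity fields; the effective overall coupling is then $(2\gamma+4\ga\gb^2)\cV$, and monotonicity $\varphi'(t)\le 0$ follows from $I-(2\gamma+4\ga\gb^2)\cV\succ 0$, which is exactly the hypothesis $2\gamma<\gb_c^{2}-4\ga\gb^{2}$ (using $\rho(\cV)=\rho(\gD^2\gL)$ by similarity). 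Evaluating $\varphi(0)$ under the product measure reduces to a Gaussian MGF of a quadratic form in the independent bounded variables $\gs_i^1\gs_i^2-q_{s(i)}$; a sub-Gaussian computation, combined with the $\vZ$-representation above, bounds this by $\det(I-(2\gamma+4\ga\gb^2)\cV)^{-1/2}$, and monotonicity transports the bound to $t=1$.

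The main obstacle is the second step: tracking the cancellations between the MSK-disorder IBP and the cavity-field IBP in the presence of species-dependent variances, and, in the indefinite case, justifying the positive/negative-part splitting with the sharp constant. This indefiniteness is precisely what forces the halving $\gb_0=\gb_c/\sqrt{4\ga}$ of the critical temperature in the non-convex regime and pins down the combinatorial prefactor $4\ga$ appearing in the determinantal bound.
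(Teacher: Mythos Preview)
This theorem is not proved in the present paper; it is quoted from \cite{DW21a}*{Theorem~1.7} and used as a black-box input to the proof of Theorem~\ref{thm:key}. There is therefore no ``paper's own proof'' to compare your proposal against.

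That said, your outline is the standard one for results of this type and is broadly consistent with how such exponential overlap-concentration bounds are established in the literature: a Guerra--Talagrand smart path between the interacting Hamiltonian and a decoupled cavity-field system whose overlap is anchored at $\mvq$ via the fixed-point equation~\eqref{syseq}, Gaussian integration by parts to compute $\varphi'(t)$, a Hubbard--Stratonovich linearization of the quadratic tilt, and a sub-Gaussian computation at $t=0$. Your identification of the role of the positive/negative decomposition of $\gL^{\half}\gD^2\gL^{\half}$ in generating the factor $\ga$ is also the right mechanism. The main places where your sketch is thin are (i) the precise bookkeeping of replica signs when bounding $\varphi'(t)$ --- one needs not just the $(1,2)$ overlap but also the $(1,3)$ and $(3,4)$ overlaps arising from the Gibbs derivative, and the indefinite case requires care to show all terms can be dominated by $\cP$-type quantities with the constant $4\ga$; and (ii) the $t=0$ computation, where the spins are independent but $\vR_{12}-\mvq$ is a sum of bounded, centered-in-mean (not centered pointwise) terms, so the Gaussian MGF bound with the exact determinantal constant requires an argument beyond a bare sub-Gaussian estimate. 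If you intend to actually write this proof out, those are the two steps that need genuine work; for the purposes of this paper, however, you may simply cite the result.
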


This theorem establishes the exponential concentration of overlap vector $\vR_{12}$ on $\mvq$, which satisfies the system of fixed point equations~\eqref{syseq}. It holds for both {\it positive semi-definite} and {\it indefinite} $\mathbf{\gD}$. Now we state our main results for the TAP equation of MSK model.

\begin{thm}\label{thm:main}
For $\gb < \gb_0$, any external field $h$, and integer $k\ge 1$. For any species $s\in \sS$ and $i\in I_s$, we have
\begin{align}
\E\left(\la \gs_i\ra - \tanh\left( \frac{\gb}{\sqrt{N}}\sum_{t=1}^m\sum_{j \in I_t, j \neq i} g_{i,j} \la \gs_j \ra + h - \gb^2(\gD^2\gL(\mv1 - \mvq))_s \la \gs_i \ra \right) \right)^{2k} \le \frac{C}{N^{k}}
\end{align}
where the constant $C$ depends on $k,\gb, \gD^2,\gL$ but not on $N$.
\end{thm}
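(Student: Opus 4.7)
The plan is to perform a quantitative cavity expansion around spin $i$ and then translate cavity Gibbs means into full Gibbs means; Theorem~\ref{thm1} furnishes the exponential $L^{2k}$ control on every remainder. Fix $s\in\sS$ and $i\in I_s$, and let $\la\cdot\ra_-$ denote the Gibbs measure on $\pms^{N-1}$ obtained by deleting all couplings involving $\gs_i$ from \eqref{hamilton}. Summing explicitly over $\gs_i\in\pms$ yields the cavity identity
\[
\la\gs_i\ra = \frac{\la\sinh(X_i+h)\ra_-}{\la\cosh(X_i+h)\ra_-},\qquad X_i := \frac{\gb}{\sqrt{N}}\sum_{j\neq i}g_{i,j}\gs_j.
\]
With $m_j^{-}:=\la\gs_j\ra_-$, $X_i^{\ast}:=\frac{\gb}{\sqrt N}\sum_{j\neq i}g_{i,j}m_j^{-}$ and $Y_i:=X_i-X_i^\ast$ (so $\la Y_i\ra_-=0$), the $\sinh$/$\cosh$ addition formulas give
\[
\la\gs_i\ra = \tanh\bigl(X_i^\ast + h + \tanh^{-1}(b_i/a_i)\bigr),\qquad a_i:=\la\cosh Y_i\ra_-,\ b_i:=\la\sinh Y_i\ra_-.
\]

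Taylor-expanding and using $\la Y_i\ra_-=0$, one finds $b_i/a_i = \tfrac16\la Y_i^3\ra_- + \text{(higher order)}$. Each moment $\la Y_i^r\ra_-$ is a degree-$r$ polynomial in the $g_{i,j}$'s whose coefficients are $r$-point centered cavity correlators. These split into a diagonal contribution (repeated $j$-indices), whose size is controlled by Gaussian concentration of the $g_{i,j}$'s, and an off-diagonal contribution built from products of pair covariances $\text{cov}_-(\gs_j,\gs_k)$. Using the replica identity $\text{cov}_-(\gs_j,\gs_k)=\la\gs_j^1(\gs_k^1-\gs_k^2)\ra_-$ together with Cauchy--Schwarz, the off-diagonal pieces reduce to Gibbs expectations of quadratic forms in $\vR_{12}-\mvq$, which Theorem~\ref{thm1} controls in $L^{2k}$ with rate $N^{-1/2}$. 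Consequently $\tanh^{-1}(b_i/a_i) = O(N^{-1/2})$ in $L^{2k}$.

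A parallel cavity computation for $j\neq i$ yields
\[
m_j - m_j^{-} = \frac{\text{cov}_-\bigl(\gs_j,\cosh(X_i+h)\bigr)}{\la\cosh(X_i+h)\ra_-} = \tanh(X_i^\ast+h)\,\text{cov}_-(\gs_j,Y_i) + \text{(higher order)},
\]
with $\text{cov}_-(\gs_j,Y_i)=\frac{\gb}{\sqrt N}g_{i,j}(1-(m_j^{-})^2)+\text{(off-diagonal)}$. Multiplying by $\frac{\gb}{\sqrt N}g_{i,j}$ and summing in $j$ converts
\[
X_i^\ast = \frac{\gb}{\sqrt N}\sum_{j\neq i}g_{i,j}m_j \; - \; \frac{\gb^2}{N}\sum_{j\neq i}g_{i,j}^{\,2}\,(1-m_j^2)\,\la\gs_i\ra \; + \; \text{(error)}.
\]
For each species $t$, Gaussian concentration gives $\frac{1}{|I_t|}\sum_{j\in I_t}g_{i,j}^{\,2} = \gD_{s,t}^2 + O(N^{-1/2})$ in $L^{2k}$, and Theorem~\ref{thm1} gives $\frac{1}{|I_t|}\sum_{j\in I_t}(1-m_j^2) = 1 - \la R^{(t)}\ra = 1 - q_t + O(N^{-1/2})$ in $L^{2k}$. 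Summing over $t$ with the weights $\gl_t$ replaces the quadratic disorder sum by $\gb^2(\gD^2\gL(\vone-\mvq))_s\,\la\gs_i\ra + O(N^{-1/2})$ in $L^{2k}$.

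Putting the two steps together, the argument of $\tanh$ on the right-hand side of Theorem~\ref{thm:main} matches $X_i^\ast+h+\tanh^{-1}(b_i/a_i)$ up to an $L^{2k}$ error of order $N^{-1/2}$; the $1$-Lipschitz property of $\tanh$ and raising to the $(2k)$th power produce the stated $CN^{-k}$ bound. The main obstacle is the uniform-in-$i,j,N$ $L^{2k}$ control of the off-diagonal cavity correlators appearing in both expansions; each such correlator can be rewritten, via the two-replica trick and Gaussian integration by parts in the $g_{i,j}$'s, as a Gibbs expectation of a polynomial function of $\vR_{12}-\mvq$, so the hypothesis $\gb<\gb_0$ is used solely to invoke Theorem~\ref{thm1}. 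Crucially, Theorem~\ref{thm1} is valid with no positive semi-definiteness assumption on $\gD^2$, which is precisely why the argument applies verbatim to the general (indefinite) MSK model.
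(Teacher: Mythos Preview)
Your high-level architecture matches the paper's: both remove spin $i$, write $\la\gs_i\ra$ as a ratio of cavity averages, approximate by $\tanh$ of the mean cavity field, and then translate cavity magnetizations $m_j^{-}$ back to full magnetizations $m_j$, which is where the Onsager term $\gb^2(\gD^2\gL(\vone-\mvq))_s\la\gs_i\ra$ appears. The genuine difference is in the technical engine. The paper does \emph{not} Taylor-expand $\sinh/\cosh$ in the centered cavity field $Y_i$; instead it proves a self-contained CLT-type lemma (Theorem~\ref{thm:key}) via smart-path interpolation, showing that for any smooth $U$, $\la U(Y_i)\ra_-$ is within $O(N^{-k})$ in $L^{2k}$ of $\E_\xi U(\xi\sqrt{(\gD^2\gL(\vone-\mvq))_s})$. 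Applied with $U(x)=e^{\pm\gb x}$ and $U(x)=xe^{\pm\gb x}$ (Corollaries~\ref{cor:1}--\ref{cor:2}), this gives both $\la\gs_i\ra\approx\tanh(X_i^\ast+h)$ and the conversion $X_i^\ast\approx\frac{\gb}{\sqrt N}\sum g_{i,j}m_j-\gb^2(\gD^2\gL(\vone-\mvq))_s\la\gs_i\ra$ in one stroke. The only overlap input needed is that the interpolation error terms $T(\ell,\ell')$ are \emph{linear} combinations of centered species overlaps, so H\"older plus Theorem~\ref{thm1} closes the argument.

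Your route, by contrast, has a real gap at the step ``$b_i/a_i=\tfrac16\la Y_i^3\ra_-+\text{(higher order)}$'' and the parallel ``higher order'' terms in $m_j-m_j^-$. The series for $\sinh/\cosh$ is infinite and $Y_i$ is unbounded, so you must control \emph{all} centered moments $\la Y_i^r\ra_-$ uniformly in $r$ and in $L^{2k}$; your assertion that the off-diagonal pieces ``reduce to Gibbs expectations of quadratic forms in $\vR_{12}-\mvq$'' is not accurate for $r\ge 3$: the $r$-point centered correlators $\la\bar\gs_{j_1}\cdots\bar\gs_{j_r}\ra_-$ unfold into multi-replica overlap polynomials (e.g.\ differences $R_{12}-R_{13}$, triple products, etc.), not mere quadratics, and the $L^{2k}$ bound on $\sum g_{i,j_1}\cdots g_{i,j_r}\la\bar\gs_{j_1}\cdots\bar\gs_{j_r}\ra_-$ then requires a full Wick-pairing analysis over $2kr$ indices. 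This can be carried out, but doing so amounts to reproving Theorem~\ref{thm:key} by hand, term by term. The interpolation packaging is precisely what lets the paper avoid this: the $p$-th derivative of $\phi(r)$ in~\eqref{eq:deriv} already contains the full combinatorics, and each factor $T(\ell,\ell')$ is a single centered-overlap expression bounded directly by Theorem~\ref{thm1}.
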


In the later sections, we use $C$ to denote any finite constant independent $N$ if it does not cause confusion. Compared to the single species case, the Onsager correction term $\gb^2(\gD^2\gL(\mv1 - \mvq))_s \la \gs_i \ra$ in the TAP equations of MSK model suggests an explicit species-wise structure. Expanding the correction term, 
\[
\gb^2(\gD^2\gL (\mv1-\mvq))_s = \gb^2\sum_{t\in \sS} \gl_t \gD^2_{st}(1-q_t),
\]
Alternatively, we can interpret that the spin $\gs_i$ from species $s$ has probability $\gl_t$ interacting with spins in species $t$, and $\gD^2_{st}$ is the corresponding renormalization. The total effects from all species on $\gs_i$ is just average weighted corrections, which is exactly the r.h.s. We illustrate this in the following particular examples of multi-species models. 

\begin{ex}[Multiple copies of single species model]
Let us consider two copies of single species model, which is a convex MSK model. Take 
\[
\gD^2 = 
\begin{pmatrix}
\gD^2_{1,1} & 0 \\
0 & \gD^2_{2,2}
\end{pmatrix}
\quad 
\text{and}
\quad 
\gL = 
\begin{pmatrix}
\gl_1 & 0 \\
0 & \gl_2
\end{pmatrix}.
\]
where $\gl_1+\gl_2 =1$. In this case, for $i\in I_1$, the TAP equation is 
\[
\langle \gs_i \rangle \approx \tanh \left( \frac{\gb}{\sqrt{N}} \sum_{j\in I_1, j\neq i} g_{i,j} \langle \gs_j \rangle + h - \gb^2 \gl_1 \gD^2_{1,1}(1-q_1) \langle \gs_i \rangle \right)
\]
the Onsager correction term in this case just contains the effects inside species 1 itself, since in this case the model is just two copies of single species model.
\end{ex}

\begin{ex}[Bipartite model]
Now if we remove the interactions inside species and only allow inter-species interactions, this becomes a bipartite model. Take 
\[
\gD^2 = 
\begin{pmatrix}
0 &  \gD^2_{1,2}\\
\gD^2_{1,2} & 0
\end{pmatrix}
\quad 
\text{and}
\quad 
\gL = 
\begin{pmatrix}
\gl_1 & 0 \\
0 & \gl_2
\end{pmatrix}.
\]
In this case, for $i\in I_1$, the TAP equation is 
\[
\langle \gs_i \rangle \approx \tanh \left( \frac{\gb}{\sqrt{N}} \sum_{j\in I_2, j\neq i} g_{i,j} \langle \gs_j \rangle + h - \gb^2 \gl_2 \gD^2_{1,2}(1-q_2) \langle \gs_i \rangle \right)
\]
the Onsager correction term in this case just contains the inter-species effects from species 2.
\end{ex}

\section{Proof of main results}
We present the proof of Theorem~\ref{thm:main} in this section. The first key ingredient is the analog of Theorem 1.7.11 in~\cite{Tal11a}. We state it formally as follows.

\begin{thm}\label{thm:key}
Assume that $\gb< \gb_0$. Consider an infinitely differentiable function $U \in \cC^{\infty}(\bR)$, we further assume for all $\ell, k \in \bN^+$, and for any Gaussian random variable $z$, 
\begin{align}\label{cond:der}
\E \abs{U^{(\ell)}(z) }^k < \infty.
\end{align}
For fixed $s \in \sS$, consider independent Gaussian r.v.s $\{\eta_j^s\}_{j=1}^N$ with mean 0 and variance $\E (\eta_j^s)^2 = \gD_{s,t}^2$ for $j\in I_t$, and $\xi$ standard Gaussian, and they are all independent from the disorder $g_{i,j}$ in the Gibbs measure, then for each $k \in \bN^+$, we have 
\begin{align}\label{eq:main}
\E \left( \left\la U \left(\frac{1}{\sqrt{N}} \sum_{j \le N} \eta_j^s \bar{\gs}_i\right) \right\ra - \E_{\xi} U\left(\xi \sqrt{\gD^2\gL(\mv1 - \mvq)_s} \right) \right)^{2k} \le \frac{C}{N^{k}},
\end{align}
where $\bar{\gs}_i :=\gs_i -\la \gs_i\ra$ and $\mvq$ is the unique solution to the equation~\eqref{syseq}, and the constant $C$ depends on $\gL, \gD^2, \gb, k, U$ but not on $N$.
\end{thm}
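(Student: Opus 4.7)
I follow Talagrand's proof of Theorem~1.7.11 in~\cite{Tal11a} for the single-species SK model, replacing the scalar overlap by the species-indexed vector overlap whose exponential concentration is furnished by Theorem~\ref{thm1}. Throughout, I read the summand inside $U$ as $\eta_j^s\bar\gs_j$ (the index $i$ on $\bar\gs_i$ in~\eqref{eq:main} appears to be a typo for $j$; only then does the conditional variance of $X$ match $(\gD^2\gL(\mv1-\mvq))_s$). For fixed Gibbs configuration $\gs$, the sum $X(\gs) := N^{-1/2}\sum_j \eta_j^s\bar\gs_j$ is centered Gaussian with variance
\[
V(\gs) := \frac{1}{N}\sum_{t\in\sS}\gD_{s,t}^2\sum_{j\in I_t}\bar\gs_j^2,
\]
so $\E_\eta U(X(\gs)) = F(V(\gs))$ with $F(v) := \E_\xi U(\xi\sqrt v)$; hypothesis~\eqref{cond:der} makes all derivatives of $F$ uniformly bounded on the range of $V$. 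Setting $\tilde V := (\gD^2\gL(\mv1-\mvq))_s$, I split
\[
\la U(X)\ra - F(\tilde V) = D_1 + D_2,\quad D_1 := \la U(X)\ra - \la F(V)\ra,\ D_2 := \la F(V)\ra - F(\tilde V),
\]
and bound $\E D_1^{2k}$ and $\E D_2^{2k}$ separately.

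\emph{Bounding $D_2$.} Using $\la\bar\gs_j^2\ra = 1-\la\gs_j\ra^2$ and a second replica, $\la V\ra = \sum_t\gD_{s,t}^2\gl_t(1-\la R_{12}^{(t)}\ra)$, whence
\[
\la V\ra - \tilde V = \sum_{t\in\sS}\gD_{s,t}^2\gl_t(q_t - \la R_{12}^{(t)}\ra).
\]
Theorem~\ref{thm1} then yields the coordinate moment bound $\E\la(R_{12}^{(t)} - q_t)^{2k}\ra \le C N^{-k}$. The Gibbs fluctuation $V - \la V\ra = -\tfrac{2}{N}\sum_t\gD_{s,t}^2\sum_{j\in I_t}\la\gs_j\ra\bar\gs_j$ has vanishing Gibbs mean, and its $2k$-th Gibbs moment reduces via a two-replica expansion to another overlap-type moment controlled by Theorem~\ref{thm1}. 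Taylor expanding $F$ around $\tilde V$ using boundedness of $F',F''$ delivers $\E D_2^{2k} = O(N^{-k})$.

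\emph{Bounding $D_1$.} Expand $D_1^{2k}$ over $2k$ independent Gibbs replicas $\{\gs^\ell\}$. Each factor $U(X^\ell) - F(V^\ell) = U(X^\ell) - \E_\eta U(X^\ell)$ is $\eta$-centered, and conditionally on the replicas the family $(X^\ell)_\ell$ is jointly Gaussian with covariance matrix
\[
V^{\ell\ell'} := \frac{1}{N}\sum_{t\in\sS}\gD_{s,t}^2\sum_{j\in I_t}\bar\gs_j^\ell\bar\gs_j^{\ell'}.
\]
A Gaussian-covariance expansion in the off-diagonals (via the identity $\partial_{V^{\ell\ell'}}\E_\eta = \E_\eta \partial^2_{X^\ell X^{\ell'}}$) shows that $\E_\eta \prod_\ell[U(X^\ell) - F(V^\ell)]$ vanishes at the independent limit, and that its first nonzero Taylor contribution (at order $k$) is a sum over pairings $P$ of $\prod_{(\ell,\ell')\in P} V^{\ell\ell'}\cdot\prod_\ell F_1(V^{\ell\ell})$, where $F_1(v) := \E_\xi U'(\xi\sqrt v)$. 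Using replica independence to factorize each pairing average over disjoint pairs, together with the one-replica identity $\la V^{\ell\ell'}\ra_{\gs^{\ell'}} = 0$ and the splitting $F_1(V^{\ell\ell}) = F_1(\tilde V) + O(V^{\ell\ell} - \tilde V)$, each pair factor is bounded via Cauchy--Schwarz by $C\,(\E_g\la(V^{\ell\ell'})^{2k}\ra)^{1/(2k)}(\E_g\la(V^{\ell\ell}-\tilde V)^{2k}\ra)^{1/(2k)} = O(N^{-1})$. Combining the $k$ pair contributions via H\"older's inequality and handling the higher-order Taylor remainder analogously then yields $\E D_1^{2k} = O(N^{-k})$.

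\emph{Main obstacle.} The principal difficulty is converting the quadratic-form exponential concentration of Theorem~\ref{thm1} into coordinate-wise polynomial moments $\E\la(R_{12}^{(t)} - q_t)^{2k}\ra \le CN^{-k}$ and the analogous bound for the off-diagonal $V^{\ell\ell'}$ (requiring positive definiteness of $\cV = |\gL^{\half}\gD^2\gL^{\half}|$, with a mild projection argument needed when $\gD^2$ is degenerate). A secondary combinatorial obstacle is tracking, in the $D_1$ step, how the one-replica identity $\la V^{\ell\ell'}\ra_{\gs^{\ell'}} = 0$ combines with the concentration of the diagonal $V^{\ell\ell}$ around $\tilde V$ to eliminate the leading-order pairing contributions and deliver the sharp $N^{-k}$ rate.
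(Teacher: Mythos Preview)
Your proposal is correct but takes a different route from the paper. The paper performs a \emph{single} smart-path interpolation: set $V(x)=U(x)-\E_\xi U(\xi\sqrt{\tilde V})$ and $\phi(r)=\E\bigl\la\prod_{\ell\le 2k}V(S_s^\ell(r))\bigr\ra$ with $S_s^\ell(r)=\sqrt r\,S_s^\ell+\sqrt{1-r}\,\xi^\ell\sqrt{\tilde V}$. At $r=0$ the factors $V^{(o_\ell)}(\xi^\ell\sqrt{\tilde V})$ are replica-independent constants, so two clean constraints bite simultaneously: each index $\ell\le 2k$ must appear at least once (from $\E_\xi V=0$) and at least twice (from $\la T(\ell,\ell')\ra_{\gs^\ell}=0$); this forces $\phi^{(p)}(0)=0$ for all $p<2k$, and a single H\"older estimate on the $2k$-th derivative finishes. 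Your two-step split $D_1+D_2$ also works, but because your $D_1$ expansion ends at the \emph{random} diagonal $\diag(V^{\ell\ell})$, the factors $F_{o_\ell}(V^{\ell\ell})$ still depend on $\gs^\ell$ and the ``appears twice'' cancellation is lost; you correctly restore it via the further splitting $F_{o_\ell}(V^{\ell\ell})=F_{o_\ell}(\tilde V)+O(V^{\ell\ell}-\tilde V)$. Be aware that this has to be carried out at \emph{every} Taylor order $k\le p<2k$, not just at the pairing term and the ``remainder'' --- one checks that nonvanishing terms obey $2p+m\ge 4k$ (with $m$ the number of error factors), whence $p+m\ge 2k$ and H\"older gives $O(N^{-k})$. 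The paper's choice of endpoint buys precisely this: complete replica decoupling at $r=0$, so no second splitting layer is needed. Your remark about extracting coordinatewise moments from the quadratic form in Theorem~\ref{thm1} applies equally to both proofs and is resolved by noting that $e_s^{\T}\gD^2\gL^{\half}$ is orthogonal to $\ker\cV$, so the linear functional $\sum_t\gD_{s,t}^2\gl_t(R_{12}^{(t)}-q_t)$ is controlled even when $\gD^2$ is singular.
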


The proof relies on the concentration results in Theorem~\ref{thm:key}. Based on this lemma,  we have the following two useful Corollaries.  

\begin{cor}\label{cor:1}
Assume $\gb < \gb_0$, for any external field $h$ and fixed $s \in \sS$, let $\eps_s \in \{+1,-1\}$ and $k \ge 1$, we have 
\begin{align}
\E \left( \left \la \exp \frac{\eps_s \gb}{\sqrt{N}} \sum_{j \le N} \eta_j^s \gs_j \right \ra - \exp \frac{\gb^2(\gD^2 \gL(\mv1 - \mvq))_s}{2}  \cdot \exp \frac{\eps_s \gb}{\sqrt{N}} \sum_{j \le N} \eta_j^s \la \gs_j \ra \right)^{2k} \le \frac{C}{N^k},
\end{align}
and 
\begin{align}
& \E \Bigg( \left \la  \frac{1}{\sqrt{N}} \sum_{j \le N} \eta_j^s \bar{\gs}_i \exp \frac{\eps_s \gb}{\sqrt{N}} \sum_{j \le N} \eta_j^s \gs_j \right \ra   \\
&\qquad - \eps_s \gb (\gD^2\gL(\mv1 - \mvq))_s \cdot \exp \frac{\gb^2(\gD^2\gL(\mv1 - \mvq))_s}{2} \cdot  \exp \frac{\eps_s \gb}{\sqrt{N}} \sum_{j \le N} \eta_j^s \la \gs_j \ra \Bigg) \le \frac{C}{N^k},
\end{align}
where $C$ is some constant independent of $N$.
\end{cor}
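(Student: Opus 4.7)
The plan is to reduce both estimates to Theorem~\ref{thm:key} by choosing appropriate test functions $U$ and exploiting the fact that $\la\gs_j\ra$ is a constant as far as the Gibbs bracket is concerned. Writing $\gs_j=\bar\gs_j+\la\gs_j\ra$, I decompose
\begin{align*}
\frac{\eps_s\gb}{\sqrt N}\sum_{j\le N}\eta_j^s\gs_j=\eps_s\gb\,S+A,\qquad
S:=\frac{1}{\sqrt N}\sum_{j\le N}\eta_j^s\bar\gs_j,\quad
A:=\frac{\eps_s\gb}{\sqrt N}\sum_{j\le N}\eta_j^s\la\gs_j\ra.
\end{align*}
Since $A$ does not involve the spins that the Gibbs measure averages over, it pulls out:
\begin{align*}
\Bigl\la \exp\tfrac{\eps_s\gb}{\sqrt N}\sum_j\eta_j^s\gs_j\Bigr\ra=e^{A}\bigl\la e^{\eps_s\gb S}\bigr\ra,
\qquad
\Bigl\la \tfrac{1}{\sqrt N}\!\sum_j\eta_j^s\bar\gs_j\;\exp\tfrac{\eps_s\gb}{\sqrt N}\sum_j\eta_j^s\gs_j\Bigr\ra
=e^{A}\bigl\la S\,e^{\eps_s\gb S}\bigr\ra.
\end{align*}

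For the first estimate I apply Theorem~\ref{thm:key} with $U(x)=\exp(\eps_s\gb x)$, and for the second with $U(x)=x\exp(\eps_s\gb x)$. Both are smooth with $U^{(\ell)}(x)$ a polynomial times $e^{\eps_s\gb x}$, so all Gaussian moments $\E|U^{(\ell)}(z)|^k$ are finite, verifying~\eqref{cond:der}. Writing $v_s:=(\gD^2\gL(\mv1-\mvq))_s$, the target Gaussian averages are explicit: the MGF gives
$\E_\xi\exp(\eps_s\gb\xi\sqrt{v_s})=\exp(\gb^2v_s/2)$, and Gaussian (Stein) integration by parts gives
$\E_\xi\xi\sqrt{v_s}\exp(\eps_s\gb\xi\sqrt{v_s})=\eps_s\gb v_s\exp(\gb^2v_s/2)$. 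Thus Theorem~\ref{thm:key} yields, for each integer $k\ge 1$,
\begin{align*}
\E\Bigl(\la e^{\eps_s\gb S}\ra-\exp\tfrac{\gb^2 v_s}{2}\Bigr)^{2k}\le\frac{C}{N^{k}},\qquad
\E\Bigl(\la S\,e^{\eps_s\gb S}\ra-\eps_s\gb v_s\exp\tfrac{\gb^2 v_s}{2}\Bigr)^{2k}\le\frac{C}{N^{k}}.
\end{align*}

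Finally, I reinsert the prefactor $e^{A}$. Conditionally on the disorder, $A$ is a mean-zero Gaussian with variance at most $\tfrac{\gb^2}{N}\sum_{j}\E(\eta_j^s)^2\la\gs_j\ra^2\le \gb^2\max_{t\in\sS}\gD^2_{s,t}$, uniformly bounded in $N$, so $\E e^{mA}\le C_m<\infty$ for every $m$. Applying each of the two bounds above with $2k$ replaced by $4k$ and then using Cauchy--Schwarz,
\begin{align*}
\E\bigl(e^{A}(\cdot)\bigr)^{2k}\le\bigl(\E e^{4kA}\bigr)^{1/2}\bigl(\E(\cdot)^{4k}\bigr)^{1/2}\le\frac{C}{N^{k}},
\end{align*}
which is precisely the Corollary.

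The only non-mechanical step is the Cauchy--Schwarz handling of the random prefactor $e^{A}$; everything else is a direct specialization of Theorem~\ref{thm:key} together with two standard Gaussian integrals. No new concentration input beyond Theorem~\ref{thm:key} is required.
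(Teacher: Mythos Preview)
Your proof is correct and follows essentially the same approach as the paper's: both apply Theorem~\ref{thm:key} with $U(x)=e^{\eps_s\gb x}$ and $U(x)=xe^{\eps_s\gb x}$, compute the Gaussian targets via the MGF and Stein's identity, and then absorb the random prefactor $e^{A}$ by Cauchy--Schwarz together with the uniform bound on its moments. Your explicit decomposition $\gs_j=\bar\gs_j+\la\gs_j\ra$ into $S$ and $A$ is a clean way to organize exactly the same computation.
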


\begin{proof}
By applying Theorem~\ref{thm:key} with $U(x) = \exp(\eps_s \gb x)$, taking $x = \frac{1}{\sqrt{N}} \sum_{j \le N} \eta_j^s \bar{\gs}_j$, we have 
\[
\E \left(\left\la  \exp \frac{\eps_s \gb}{\sqrt{N}} \sum_{j \le N} \eta_j^s \bar{\gs}_j - \exp\frac{\gb^2}{2}(\gD^2\gL(\mv1 - \mvq))_s \right \ra\right)^{4k} \le \frac{C}{N^{2k}},
\]
On the other hand, by standard Gaussiain estimates, notice that 
\[
\E \left( \exp \frac{\eps_s \gb}{\sqrt{N}} \sum_{j \le N}\eta_j^s \la \gs_j \ra \right)^{4k} \le C^k,
\]
recall $C$ is some finite constant independent of $N$.
By Cauchy-Schwartz, we have 
\begin{align*}
& \E \left( \left \la \exp \frac{\eps_s \gb}{\sqrt{N}} \sum_{j \le N} \eta_j^s \gs_j \right \ra - \exp \frac{\gb^2(\gD^2 \gL(\mv1 - \mvq))_s}{2}  \cdot \exp \frac{\eps_s \gb}{\sqrt{N}} \sum_{j \le N} \eta_j^s \la \gs_j \ra \right)^{2k} \\
\le & \left( \E \left(\left\la  \exp \frac{\eps_s \gb}{\sqrt{N}} \sum_{j \le N} \eta_j^s \bar{\gs}_j - \exp\frac{\gb^2}{2}(\gD^2\gL(\mv1 - \mvq))_s \right \ra\right)^{4k} \cdot \E \left( \exp \frac{\eps_s \gb}{\sqrt{N}} \sum_{j \le N}\eta_j^s \la \gs_j \ra \right)^{4k} \right)^{1/2} \\
\le & \frac{C}{N^k}.
\end{align*}
The proof for the second inequality can be completed in a similar fashion. Applying Theorem~\ref{thm:key} with $U(x) = x \exp (\eps_s \gb x)$ for $x = \frac{1}{\sqrt{N}} \sum_{j \le N} \eta_j^s \bar{\gs}_j$, we have
\begin{align*}
 \E \Bigg( \left \la  \frac{1}{\sqrt{N}} \sum_{j \le N} \eta_j^s \bar{\gs}_i \exp \frac{\eps_s \gb}{\sqrt{N}} \sum_{j \le N} \eta_j^s \gs_j \right \ra   
 - \eps_s \gb (\gD^2\gL(\mv1 - \mvq))_s \cdot \exp \frac{\gb^2(\gD^2\gL(\mv1 - \mvq))_s}{2}  \Bigg) \le \frac{C}{N^k},
\end{align*}
where we used Gaussian integration by parts 
\[
\E \xi \exp  \left(\xi \cdot  \eps_s \gb\sqrt{(\gD^2\gL(\mv1 - \mvq))_s} \right) = \eps_s \gb(\gD^2\gL(\mv1 - \mvq))_s \exp \frac{\gb^2(\gD^2\gL(\mv1 - \mvq))_s}{2}.
\]
The rest follows similarly by Cauchy-Schwarz.
\end{proof}

\begin{cor}\label{cor:2}
Let $\cE_s : = \exp \left( \frac{\eps_s \gb}{\sqrt{N}} \sum_{j \le N} \eta_j^s \gs_j + \eps_s h\right)$, for each $s \in \sS$, we have 

\begin{align}\label{eq:cor2-1}
\E \left( \frac{\la \av \eps_s \cE_s \ra}{\la \av \cE_s \ra} - \tanh\left(  \frac{\gb}{\sqrt{N}} \sum_{j \le N} \eta_j^s \la \gs_j \ra + h\right)\right)^{2k} \le \frac{C}{N^k}, 
\end{align}
and 
\begin{align}\label{eq:cor2-2}
\E \left(  \frac{1}{\sqrt{N}} \sum_{j \le N} \eta_j^s \frac{\la \gs_j \av \cE_s \ra }{\la \av \cE_s \ra} - \gb (\gD^2\gL(\mv1 - \mvq))_s \frac{\la \av \eps_s \cE_s \ra}{\la \av \cE_s \ra} - \frac{1}{\sqrt{N}} \sum_{j \le N} \eta_j^s \la \gs_j \ra \right)^{2k} \le \frac{C}{N^k}
\end{align}
where the operator $\av$ is the average over $\eps_s = \pm 1$, again the constant $C$ is independent of $N$.
 \end{cor}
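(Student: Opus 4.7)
The plan is to deduce both statements from Corollary~\ref{cor:1} by splitting the symmetric average $\av$ over $\eps_s \in \{+1,-1\}$ and propagating the resulting $L^{2k}$-bounds through the division by $\la \av \cE_s \ra$. Throughout, abbreviate $D_s := (\gD^2\gL(\mv1-\mvq))_s$, $A_s := \exp(\gb^2 D_s/2)$, and $X := \frac{\gb}{\sqrt N}\sum_{j\le N}\eta_j^s \la\gs_j\ra + h$. The fundamental observation that makes the division harmless is the pointwise lower bound
\[
\la \av \cE_s \ra \;\ge\; \av \cE_s \;=\; \cosh\Bigl(\tfrac{\gb}{\sqrt N}\textstyle\sum_j \eta_j^s \gs_j + h\Bigr) \;\ge\; 1,
\]
valid for every realization of the disorder and of the $\eta_j^s$; the same $|\sinh|\le\cosh$ comparison yields $|\la\av\eps_s\cE_s\ra| \le \la\av\cE_s\ra$.

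For \eqref{eq:cor2-1}, I would multiply the first bound of Corollary~\ref{cor:1} by the deterministic factor $e^{\eps_s h}$ and then average over $\eps_s=\pm 1$. Using $\av e^{\eps_s X}=\cosh X$ and $\av \eps_s e^{\eps_s X}=\sinh X$, this yields
\[
\E\bigl(\la\av\cE_s\ra - A_s\cosh X\bigr)^{2k}\le \tfrac{C}{N^k}, \qquad \E\bigl(\la\av\eps_s\cE_s\ra - A_s\sinh X\bigr)^{2k}\le \tfrac{C}{N^k}.
\]
Since $A_s\sinh X/A_s\cosh X = \tanh X$ is precisely the target, I would then invoke the elementary inequality $|a/b-a'/b'|\le |a-a'|+|b-b'|$, valid whenever $b\ge 1$ and $|a'|\le b'$, raise to the $2k$-th power via $(x+y)^{2k}\le 2^{2k-1}(x^{2k}+y^{2k})$, and take expectation.

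For \eqref{eq:cor2-2}, I would first bring all three terms to the common denominator $\la\av\cE_s\ra$; since $\eta_j^s$ and $\av$ pass through the Gibbs bracket, the LHS becomes
\[
\frac{\la\av\bigl(\tfrac{1}{\sqrt N}\sum_j \eta_j^s \bar\gs_j\bigr)\cE_s\ra \;-\; \gb D_s \la\av\eps_s \cE_s\ra}{\la\av\cE_s\ra}.
\]
Corollary~\ref{cor:1}'s second bound (multiplied by $e^{\eps_s h}$ and averaged over $\eps_s$) gives $\la\av(\tfrac{1}{\sqrt N}\sum_j\eta_j^s\bar\gs_j)\cE_s\ra\approx \gb D_s A_s \sinh X$ with $L^{2k}$-error of order $N^{-1/2}$; combined with $\la\av\eps_s\cE_s\ra\approx A_s\sinh X$ from the previous step, the two leading contributions in the numerator cancel exactly, leaving a residual of the same order. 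Dividing by $\la\av\cE_s\ra\ge 1$ preserves the bound.

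The main technical obstacle is precisely the division: without a uniform lower bound on $\la\av\cE_s\ra$, the $L^{2k}$-errors in the numerator could in principle be amplified. The explicit representation of $\av\cE_s$ as a hyperbolic cosine, bounded below by $1$ uniformly in $N$, in the disorder, and in the auxiliary Gaussians $\eta_j^s$, removes this concern and is what allows the estimates from Corollary~\ref{cor:1} to transfer cleanly to the ratios appearing in Corollary~\ref{cor:2}.
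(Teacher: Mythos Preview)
Your proposal is correct and follows essentially the same route as the paper: apply Corollary~\ref{cor:1} after inserting the factor $e^{\eps_s h}$, average over $\eps_s$ to produce the $\sinh$/$\cosh$ approximations, and then use the elementary ratio inequality together with the pointwise bound $\la\av\cE_s\ra\ge 1$; for \eqref{eq:cor2-2} both you and the paper reduce to controlling the numerator $\tfrac{1}{\sqrt N}\sum_j\eta_j^s\la\bar\gs_j\av\cE_s\ra-\gb D_s\la\av\eps_s\cE_s\ra$ via the second bound of Corollary~\ref{cor:1}. One small notational slip: the line $\la\av\cE_s\ra\ge\av\cE_s$ compares a number to a $\gs$-dependent quantity; what you mean (and what suffices) is that $\av\cE_s=\cosh(\cdot)\ge 1$ for every $\gs$, hence $\la\av\cE_s\ra\ge 1$.
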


\begin{proof}
We start proving the first inequality. To do that, we first derive approximations for $\la \av \eps_s \cE_s \ra $ and $\la \av \cE_s \ra $.

By the definition of $\av$, we have 
\begin{align*}
 \la \av \eps_s \cE_s \ra  = \left\la \frac12 \cdot\exp \left( \frac{ \gb}{\sqrt{N}} \sum_{j \le N} \eta_j^s \gs_j +  h\right) - \frac12 \cdot \exp \left( \frac{- \gb}{\sqrt{N}} \sum_{j \le N} \eta_j^s \gs_j -h\right) \right\ra,
\end{align*}
By Corollary~\ref{cor:1}, we know that 
\begin{align*}
& \E \Bigg( \left \la \exp \eps_s\left(\frac{ \gb}{\sqrt{N}} \sum_{j \le N} \eta_j^s \gs_j +h \right) \right \ra  \\
 &\qquad \qquad - \exp \frac{\gb^2(\gD^2 \gL(\mv1 - \mvq))_s}{2}  \cdot \exp \eps_s \left(\frac{ \gb}{\sqrt{N}} \sum_{j \le N} \eta_j^s \la \gs_j \ra +  h \right)\Bigg)^{2k} \le \frac{C}{N^k},
\end{align*}
This implies 
\begin{align}\label{eq:approx1}
\E \left( \la \av \eps_s \cE_s \ra - \exp \frac{\gb^2(\gD^2 \gL(\mv1 - \mvq))_s}{2}  \cdot \sinh \left( \frac{ \gb}{\sqrt{N}} \sum_{j \le N} \eta_j^s \la \gs_j \ra + h\right) \right) \le \frac{C}{N^k}.
\end{align}
Similarly, we have 
\[
\E \left( \la \av  \cE_s \ra - \exp \frac{\gb^2(\gD^2 \gL(\mv1 - \mvq))_s}{2}  \cdot \cosh \left( \frac{ \gb}{\sqrt{N}} \sum_{j \le N} \eta_j^s \la \gs_j \ra + h\right) \right) \le \frac{C}{N^k}.
\]
Together it gives rise to the first inequality~\eqref{eq:cor2-1} combining with the following fact: 
\[
\abs{\frac{A'}{B'} - \frac{A}{B}}\le \abs{A-A'}+ \abs{B-B'} \quad \text{for} \ \abs{A'}\le B', B\ge1.
\]
To prove the second inequality, it only needs to prove the following fact: 
\begin{align}\label{eq:fact}
\frac{1}{\sqrt{N}} \sum_{j \le N} \eta_j^s \left(\la  \gs_j   \av \cE_s \ra - \la \gs_j\ra \la \av \cE_s\ra  \right)\approx \gb (\gD^2\gL(\mv1-\mvq))_s \la \av \eps_s \cE_s \ra.
\end{align}
Notice that the l.h.s is just $\frac{1}{\sqrt{N}} \sum_{j \le N } \eta_j^s \la \bar{\gs}_j \av \cE_s \ra $, by the definition of $\av$ and $\cE_s$, applying the second inequality in Corollary~\ref{cor:1}, it leads to 
\begin{align*}
& \E \Bigg( 
 \gb^2(\gD^2\gL(\mv1-\mvq))_s \cdot \exp \frac{\gb^2 (\gD^2\gL(\mv1-\mvq))_s}{2}\cdot \sinh\left( \frac{\gb}{\sqrt{N}} \sum_{j\le N}\eta_j^s \la \gs_j \ra + h \right) \\
 & \qquad \quad -\frac{1}{\sqrt{N}} \sum_{j \le N } \eta_j^s \la \bar{\gs}_j \av \cE_s \ra \Bigg)^{2k} \le \frac{C}{N^k}.
\end{align*}
Combining with the results for $\la \av \eps_s \cE_s \ra$ in~\eqref{eq:approx1}, we have the desired result in~\eqref{eq:fact}. 
\end{proof}

Now we present the proof for the key result in Theorem~\ref{thm:key}. Before that, let us recall the following useful lemma based on Gaussian integration by parts.

\begin{lem}[\cite{Tal11b}*{Lemma 1.3.1}]\label{lem:derivative}
Consider two independent centered Gaussian vectors $\mvu = \{u_i\}_{i\in I}$ and $\mvv = \{v_i\}_{i\in I}$ with index set $I$. The interpolation $\mvu^r= \{u^r_i\}_{i\in I}$ is given by $u^r_i:= \sqrt{r} \cdot u_i + \sqrt{1-r} \cdot v_i$ for $r\in[0,1]$. For $F \in C^{\infty}(\bR^{\abs{I}})$, the derivative of $\phi(r):=\E F(\mvu^r)$ is given by 
\[
\phi'(r) = \frac{1}{2}\sum_{i,j\in I}(\E u_iu_j -\E v_i v_j) \E \left(\frac{\partial^2 F}{\partial x_i \partial x_j }(\mvu^r) \right).
\]

\end{lem}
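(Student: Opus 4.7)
The plan is to combine the chain rule, which splits $\phi'(r)$ into two linear terms involving $u_i$ and $v_i$, with Gaussian integration by parts applied independently in the $\mvu$- and $\mvv$-variables. The key cancellation is that the factors $\sqrt{r}$ and $\sqrt{1-r}$ produced by IBP exactly absorb the $1/(2\sqrt{r})$ and $1/(2\sqrt{1-r})$ factors coming from the chain rule, producing a finite expression on all of $[0,1]$.

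First I would differentiate under the expectation. Writing $u_i^r = \sqrt{r}\, u_i + \sqrt{1-r}\, v_i$ and applying the chain rule gives, for $r \in (0,1)$,
\begin{align*}
\phi'(r) = \sum_{i \in I} \E \left[\frac{\partial F}{\partial x_i}(\mvu^r) \left(\frac{u_i}{2\sqrt{r}} - \frac{v_i}{2\sqrt{1-r}}\right)\right].
\end{align*}
Fix $i$ and consider the $u_i$-term. Since $\mvu$ and $\mvv$ are independent, one can condition on $\mvv$ and view $H(\mvu) := \frac{\partial F}{\partial x_i}(\mvu^r)$ as a smooth function of the centered Gaussian vector $\mvu$ alone. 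The standard Gaussian integration-by-parts identity
\begin{align*}
\E[u_i H(\mvu)] = \sum_{j \in I} \E(u_i u_j) \, \E\!\left[\frac{\partial H}{\partial u_j}(\mvu)\right],
\end{align*}
together with $\partial_{u_j} H(\mvu) = \sqrt{r}\, \frac{\partial^2 F}{\partial x_i \partial x_j}(\mvu^r)$ (by the chain rule applied to the interpolation), then yields
\begin{align*}
\E\!\left[u_i\, \frac{\partial F}{\partial x_i}(\mvu^r)\right] = \sqrt{r} \sum_{j\in I} \E(u_iu_j)\, \E\!\left[\frac{\partial^2 F}{\partial x_i \partial x_j}(\mvu^r)\right].
\end{align*}
The analogous computation for the $v_i$-term produces the same expression with $\E(u_iu_j)$ replaced by $\E(v_iv_j)$ and $\sqrt{r}$ replaced by $\sqrt{1-r}$.

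Substituting back, the $\sqrt{r}$ factor cancels $1/(2\sqrt{r})$ and the $\sqrt{1-r}$ factor cancels $1/(2\sqrt{1-r})$, leaving
\begin{align*}
\phi'(r) = \frac{1}{2}\sum_{i,j \in I}\bigl(\E u_i u_j - \E v_i v_j\bigr)\, \E\!\left[\frac{\partial^2 F}{\partial x_i \partial x_j}(\mvu^r)\right],
\end{align*}
which is the claimed identity; since this right-hand side is continuous in $r$ on $[0,1]$, the formula extends to the endpoints by continuity.

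The only delicate point is justifying both the interchange of $\frac{d}{dr}$ with $\E$ and the pointwise Gaussian IBP; this requires sufficient integrability of $F$ and its first two partial derivatives against the Gaussian density. In every application of this lemma within the paper, $F$ is either bounded or of at most sub-exponential growth in its arguments, so dominated convergence applies routinely. For the statement in full generality, one imposes polynomial- (or sub-exponential-) growth assumptions on $F$, $\partial_i F$, $\partial_i \partial_j F$, or approximates $F$ by compactly supported mollifications and passes to the limit; the argument is standard and fully detailed in~\cite{Tal11b}.
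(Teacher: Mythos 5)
Your proof is correct and is the standard argument: chain rule to split $\phi'(r)$ into the $u_i$- and $v_i$-contributions, followed by Gaussian integration by parts in each coordinate vector separately (using independence of $\mvu$ and $\mvv$ to condition on one while integrating by parts in the other), with the $\sqrt r$ and $\sqrt{1-r}$ factors from the inner chain rule cancelling the $1/(2\sqrt r)$ and $1/(2\sqrt{1-r})$ from the outer one. This is exactly the proof of \cite{Tal11b}*{Lemma~1.3.1}, which the paper cites without reproducing, so there is nothing to contrast. One small inaccuracy in your closing remark: the functions $U$ fed into this lemma in Corollary~\ref{cor:1} (namely $e^{\eps_s\gb x}$ and $x e^{\eps_s\gb x}$) are of exponential, not sub-exponential, growth; the integrability needed to differentiate under the expectation and to apply Gaussian IBP is nonetheless guaranteed because the arguments are Gaussian, which is precisely what condition~\eqref{cond:der} is there to ensure.
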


Now we turn to the proof of Theorem~\ref{thm:key}, which is based on the overlap concentration results and the Lemma~\ref{lem:derivative}.

\begin{proof}[Proof of Theorem~\ref{thm:key}]

We use the smart path interpolation to prove the theorem. For convenience, let 
\[
V(x) = U(x) - \E_{\xi} U\left(\xi \sqrt{(\gD^2\gL(\mv1-\mvq))_s}\right),
\]
such that $\E_{\xi} V(\xi \sqrt{(\gD^2\gL(\mv1-\mvq))_s}= 0$, and for $\ell \le 2k$,
\[
S^\ell_s = \frac{1}{\sqrt{N}} \sum_{j\le N } \eta_j^s \bar{\gs}_j^{\ell}, \ \text{where} \ \mvgs^\ell \in \{ -1,+1\}^N.
\]

 For $r \in [0,1]$ and $\{\xi^\ell\}_{\ell \le 2k}$ independent standard Gaussians, consider the following interpolated Gaussian fields,
\[
S_s^\ell(r) := \sqrt{r}\cdot  S^\ell_s + \sqrt{1-r}\cdot \xi^\ell \sqrt{(\gD^2\gL(\mv1-\mvq))_s}.
\]
The goal is to control $\E \left\la \prod_{\ell \le 2k} V(S_s^{\ell}(1) \right \ra $, which is the l.h.s of~\eqref{eq:main}. In general, we let
\[
\phi(r) := \E \left\la \prod_{\ell \le 2k} V(S_s^{\ell}(r)) \right \ra = \E \left\la \E_\xi\prod_{\ell \le 2k}  V(S_s^{\ell}(r)) \right \ra,
\]
where the second equality just splits the total expectation. 
To prove~\eqref{eq:main}, we need to prove the following derivative bounds
\begin{align*}
\frac{\partial^p \phi}{\partial r^p}(r)\bigg\lvert_{r=0} = 0 \ \text{for} \ p<2k; \ \text{and} \ \abs{\frac{\partial^{2k} \phi}{\partial r^{2k}}(r)} \le \frac{C}{N^k}.
\end{align*}
Next, let us turn to compute those derivatives by using the Lemma~\ref{lem:derivative}. Taking $F(\vx):= \prod_{\ell \le 2k} V(x_\ell)$ for $\vx:=(x_1,x_2,\ldots, x_{2k})$. Notice that $\phi(r)=\E \la F(\vS_s(r))\ra$ with $\vS_s(r):=(S^1_s(r),S^2_s(r),\ldots, S^{2k}_s(r))$, applying the derivative formula iteratively for $p$ times, we have 
\begin{align}\label{eq:deriv}
\frac{\partial^p \phi}{\partial r^p}(r) = 2^{-p}  \sum_{\ell_1, \ell_1', \cdots, \ell_p, \ell_p'} \E \left \la  \prod_{j=1}^pT(\ell_j,\ell_j') \frac{\partial^{\mvo} F }{\partial \mvx^{\mvo}}\right \ra,
\end{align}
where $\mvo=\{ o_\ell \}_{\ell \le 2k}$ corresponds to the order of derivative with respect to $\{x_{\ell}\}_{\ell \le 2k}$. In particular, 
\[
\frac{\partial^{\mvo} F }{\partial \mvx^{\mvo}} = \prod_{\ell \le 2k} V^{(o_\ell)}(x_\ell).
\]
Besides, for $\ell = \ell'$,
\begin{align*}
T(\ell , \ell) & :=
 \E[ S^{\ell}_s]^2 - \E [\xi^{\ell} \sqrt{\gD^2\gL(\mv1-\mvq)_s}]^2 \\
 & =\frac{1}{N}\sum_{t \in \sS} \gD_{s,t}^2 \sum_{j\in I_t}(\bar{\gs}_i^\ell)^2 - \gD^2\gL(\mv1-\mvq)_s, 
\end{align*}
and for $\ell \neq \ell'$, by the independence property of $\xi^{\ell}, \xi^{\ell'}$,

\begin{align*}
T(\ell , \ell') & := \E [S_s^\ell S_s^{\ell'}] - \E [\xi^\ell \xi^{\ell'} \sqrt{\gD^2\gL(\mv1-\mvq)_s}] \\
& = \frac{1}{N}\sum_{t \in \sS} \gD_{s,t}^2 \sum_{ j\in I_t}  \bar{\gs}_i^{\ell}\bar{\gs}_i^{\ell'}.
\end{align*}

Let us first verify $\frac{\partial^p \phi}{\partial r^p}(r)\big\lvert_{r=0} = 0 \ \text{for} \ p<2k$. By~\eqref{eq:deriv}, 
\begin{align*}
\frac{\partial^p \phi}{\partial r^p}(r)\bigg\lvert_{r=0} & = 2^{-p}  \sum_{\ell_1, \ell_1', \cdots, \ell_p, \ell_p'} \E \frac{\partial^{\mvo} F }{\partial \mvx^{\mvo}}\left( \left(\xi^\ell\sqrt{(\gD^2\gL(\mv1-\mvq))_s}\right)_{\ell \le 2k}\right) \E \left \la  \prod_{j=1}^pT(\ell_j,\ell_j') \right \ra, \\
& = 2^{-p}  \sum_{\ell_1, \ell_1', \cdots, \ell_p, \ell_p'} \E \prod_{\ell \le 2k} V^{(o_\ell)}\left( \left(\xi^\ell\sqrt{(\gD^2\gL(\mv1-\mvq))_s}\right)_{\ell \le 2k}\right) \E \left \la  \prod_{j=1}^pT(\ell_j,\ell_j') \right \ra,
\end{align*}
By the fact $\E V(\xi (\gD^2\gL(\mv1-\mvq))_s) = 0 $, we know if $o_{\ell} \ge 1 \ \text{for all $\ell\le 2k$}$, then 
\[
 \E \prod_{\ell \le 2k} V^{(o_\ell)}\left( \left(\xi_\ell\sqrt{(\gD^2\gL(\mv1-\mvq))_s}\right)_{\ell \le 2k}\right) \neq 0,
\]
otherwise it will be 0. It means that every number $\ell \le 2k$ appears at least once in $\ell_1, \ell_1', \ldots, \ell_p,\ell_p'$. On the other hand, for $\ell \neq \ell'$, notice that the average of $T(\ell,\ell')$ over $\mvgs^\ell$ and $\mvgs^{\ell'}$ are both zero. This implies that in order for $\left\la \prod_{j=1}^p T(\ell_j,\ell_j') \right \ra \neq 0 $, it needs every number $\ell \le 2k$ must appear at least twice in the list $\ell_1, \ell_1', \ldots, \ell_p,\ell_p'$. It further implies the total length of the list $2p \ge 4k$, \ie $p \ge 2k$. Otherwise $\frac{\partial^p \phi}{\partial r^p}(r)\big\lvert_{r=0}=0$ for $p< 2k$.

Next we verify the second claim, it needs a control of $T(\ell,\ell')$. Notice that $T(\ell, \ell')$ is intimately connected with the centered overlaps. In particular, recall $\bar{\gs}_i^\ell  = \gs_i^\ell - \la \gs_i^\ell \ra $, then 

\begin{align*}
T(\ell, \ell)  & = \frac{1}{N}\sum_{t \in \sS} \gD_{s,t}^2 \sum_{j\in I_t}(\bar{\gs}_i^\ell)^2 - \gD^2\gL(\mv1-\mvq)_s \\
& = -2 \left( \frac1N \sum_{t \in \sS} \gD^2_{s,t} \sum_{i \in I_t} ( \gs_i^\ell \la \gs_i^\ell \ra -q_t) \right)  + \frac{1}{N}\sum_{t \in \sS} \gD_{s,t}^2\sum_{i \in I_t} (\la \gs_i^\ell \ra^2 -q_t).
\end{align*}

For the first term on the r.h.s, using the fact $\la \gs_i^\ell \ra = \la \gs_i^{\ell'}\ra$ and by Jensen's inequality with respect to $\la \gs_i^{\ell'} \ra$, we have 
\begin{align*}
\E \left \la \left(  \frac1N \sum_{t \in \sS} \gD^2_{s,t} \sum_{i \in I_t} ( \gs_i^\ell \la \gs_i^\ell \ra -q_t) \right)^{2p} \right \ra  &  = \E \left \la \left(  \frac1N \sum_{t \in \sS} \gD^2_{s,t} \sum_{i \in I_t} ( \gs_i^\ell \la \gs_i^{\ell'} \ra -q_t) \right)^{2p} \right \ra \\
& \le \E \left\la \left(\sum_{t\in \sS} \gD^2_{s,t} \gl_t \bar{R}_{\ell,\ell'}^{(t)} \right)^{2p} \right\ra \le C\cdot N^{-p}, 
\end{align*}
the last step is based on the exponential overlap concentration in Theorem~\ref{thm1}. 
Similarly, for the second term on the r.h.s, we have 
\[
\frac{1}{N}\sum_{t \in \sS} \gD_{s,t}^2\sum_{i \in I_t} (\la \gs_i^\ell \ra^2 -q_t) = \frac{1}{N}\sum_{t \in \sS} \gD_{s,t}^2\sum_{i \in I_t} (\la \gs_i^\ell \ra \la \gs_{i}^{\ell'}\ra  -q_t),
\]
then applying Jensen's inequality under $\la \cdot \ra$, it can be related to the overlap concentration again. On the other hand, for $\ell \neq \ell'$, $T(\ell,\ell')$ can be rewritten as 
\begin{align*}
 T(\ell,\ell') & = \frac{1}{N}\sum_{t \in \sS} \gD_{s,t}^2 \sum_{ j\in I_t}  ({\gs}_i^{\ell}{\gs}_i^{\ell'} -q_t)- \frac{1}{N}\sum_{t \in \sS} \gD_{s,t}^2 \sum_{ j\in I_t}  ({\gs}_i^{\ell}\la {\gs}_i^{\ell'} \ra -q_t) \\
 & \quad - \frac{1}{N}\sum_{t \in \sS} \gD_{s,t}^2 \sum_{ j\in I_t}  (\la {\gs}_i^{\ell}\ra {\gs}_i^{\ell'} -q_t) +  \frac{1}{N}\sum_{t \in \sS} \gD_{s,t}^2 \sum_{ j\in I_t}  (\la {\gs}_i^{\ell} \ra \la {\gs}_i^{\ell'}\ra  -q_t),
\end{align*}
all the terms on the r.h.s can be similarly reduced to function of overlaps and upper bounded accordingly by Theorem~\ref{thm1}. Putting all these together, we have 
\[
\forall p\ge1, \quad \E \la T(\ell,\ell')^{2p}\ra  \le C \cdot N^{-p} \quad \text{for all $\ell,\ell'$}. 
\]
With this input, combining with the computation of derivative in~\eqref{eq:deriv} and the condition~\ref{cond:der}, by H\"older inequality, we can easily verify that 
\[
\abs{\frac{\partial^{2k} \phi}{\partial r^{2k}}(r)} \le \frac{C}{N^k}.
\]

\end{proof}

\subsection{Proof of Theorem~\ref{thm:main}}

We start with the cavity idea, fixing a species $s\in \sS$, suppose we remove a spin $\gs_i$ in that species~\ie~$i\in I_s$ to create cavity, the Hamiltonian of the associated size $N-1$ system is 
\[
H_{N-1}(\mvgs_{-s}) = \frac{\gb}{\sqrt{N}} \sum_{k<j; k,j \neq i} g_{k,j} \gs_k \gs_j + h \sum_{k\neq i} \gs_k,
\]
where $\mvgs_{-s} \in \{ -1,+1\}^{N-1}$ by removing the $i$-th coordinate in $\mvgs \in \gS_N$. 
Note that $\gb_{-}:= \gb \cdot \sqrt{\frac{N-1}{N}} \le \gb$, by \cite{Tal11a}*{Proposition 1.6.1}, it's easy to see for $i \in I_s$
\[
\la \gs_i \ra  = \frac{\la \av \eps_s \tilde{\cE}_s \ra_{s-}}{\la \av \tilde{\cE}_s \ra_{s-}},
\]
where the Gibbs average $\langle \cdot \rangle_{s-}$ is w.r.t the Hamiltonian $H_{N-1}(\mvgs_{-s})$. The operator $\text{AV}$ is just taking average w.r.t the spin removed, that is $\eps_s$, and
\[
\tilde{\cE}_s := \exp \left(\frac{\eps_s \gb}{\sqrt{N}} \sum_{j \neq i} g_{i,j} \gs_j + \eps_s h \right) = \exp \left(\frac{\eps_s \gb_{-}}{\sqrt{N-1}} \sum_{j \neq i} g_{i,j} \gs_j + \eps_s h \right).
\]
 From the above expression, we have $\frac{\gb}{\sqrt{N}} = \frac{\gb_-}{\sqrt{N-1}}$. The cavity argument results in a small change of $\gb$ to $\gb_-$, this change will also create a small shift on $\mvq$ in the fixed point equation~\eqref{syseq}. We define the shifted fixed point as $\mvq_{s-}$
\begin{align}
    (\mvq_{s-})_t = \E \tanh^2(\gb_- \eta \sqrt{(\gD^2\gL \mvq_{s-})_t}+h),\quad t=1,2,\ldots, m.
\end{align}
The following lemma formally states the difference between $\mvq$ and $\mvq_{s-}$ is small.
\begin{lem}\label{lem:close-q}
    For $\gb<\gb_0$, we have 
    \[
    \norm{\mvq - \mvq_{s-}}_1 \le \frac{C}{N}
    \]
\end{lem}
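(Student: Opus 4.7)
The plan is a contraction/perturbation argument at the fixed-point equation~\eqref{syseq}. For $\gb'>0$ define the map $F_{\gb'}\colon [0,1]^m \to [0,1]^m$ by
\[
F_{\gb'}(\vx)_t := \E \tanh^2\bigl(\gb' \eta \sqrt{(\gD^2\gL \vx)_t}+h\bigr),\qquad t=1,\ldots,m,
\]
so that $\mvq = F_\gb(\mvq)$ and $\mvq_{s-}=F_{\gb_-}(\mvq_{s-})$. Adding and subtracting $F_\gb(\mvq_{s-})$ yields
\[
\mvq - \mvq_{s-} = \bigl[F_\gb(\mvq) - F_\gb(\mvq_{s-})\bigr] + \bigl[F_\gb(\mvq_{s-}) - F_{\gb_-}(\mvq_{s-})\bigr],
\]
which I would control by (i) a contractivity estimate for $F_\gb$ and (ii) Lipschitz dependence of $F_{\gb'}$ on $\gb'$.

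For (i), by the mean value theorem it suffices to bound the Jacobian $DF_\gb$ uniformly on $[0,1]^m$. A direct Gaussian integration by parts gives
\[
(DF_\gb(\vx))_{t,r} = \gb^2 (\gD^2\gL)_{t,r}\,\psi_t(\vx),\qquad
\psi_t(\vx) := \E\bigl[\sech^4 - 2\tanh^2\sech^2\bigr]\bigl(\gb\eta\sqrt{(\gD^2\gL\vx)_t}+h\bigr),
\]
with $|\psi_t(\vx)|\le 1$ uniformly (the integrand equals $\sech^2(1-3\tanh^2)$). Conjugating by $\gL^{1/2}$ turns $\gD^2\gL$ into the symmetric matrix $\gL^{1/2}\gD^2\gL^{1/2}$, whose $\ell^2$ operator norm equals $\rho(\gD^2\gL)=\gb_c^{-2}$. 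Hence the operator norm of $DF_\gb$ in the $\gL^{1/2}$-conjugated $\ell^2$ norm (equivalent to $\norm{\cdot}_1$ since $m$ is fixed) is bounded by $(\gb/\gb_c)^2$, and the hypothesis $\gb<\gb_0\le\gb_c$ delivers a contraction constant $c<1$. This is essentially the spectral mechanism underlying the uniqueness of $\mvq$ already exploited in~\cite{DW21a}.

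For (ii), an analogous computation yields $\partial_{\gb'}F_{\gb'}(\vx)_t = 2\gb'(\gD^2\gL\vx)_t\,\psi_t(\vx)$, which is uniformly bounded for $\vx\in[0,1]^m$ and $\gb'\in[0,\gb]$. Together with $\gb-\gb_-=\gb(1-\sqrt{(N-1)/N})=O(1/N)$, this gives $\norm{F_\gb(\mvq_{s-})-F_{\gb_-}(\mvq_{s-})}_1 \le C/N$. Combined with (i) one obtains $(1-c)\norm{\mvq-\mvq_{s-}}_1 \le C/N$, which is the claim.

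The main technical obstacle is to make the contraction constant $c<1$ rigorous in a norm equivalent to $\ell^1$, since $\gD^2\gL$ is not symmetric in general and one must pass through the conjugation by $\gL^{1/2}$ (and be careful that the indefinite case $\ga=2$ still leaves room in the spectral bound). If direct contractivity becomes delicate, a cleaner alternative is the implicit function theorem at the fixed point: $I-DF_\gb(\mvq)$ is invertible by the same spectral bound, so $\gb'\mapsto \mvq_{\gb'}$ is $C^1$ on a neighborhood of $\gb$ with uniformly bounded derivative, yielding $\norm{\mvq-\mvq_{s-}}_1 \le C|\gb-\gb_-|=O(1/N)$ at once.
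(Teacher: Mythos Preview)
Your proposal is correct, and the alternative you mention at the end (implicit function theorem / implicit differentiation at the fixed point) is precisely the route the paper takes: it views $\mvq=\mvq(\gb)$ along the fixed-point equation, differentiates to obtain $(I-J)\,\mvq'(\gb)=\partial_\gb f$ with the same Jacobian entries $\partial_{q_r} f_t = \tfrac{\gb^2}{2}(\gD^2\gL)_{t,r}\,\E g''(\cdots)$ you computed, and then argues that for $\gb<\gb_0$ the gradient $\mvq'(\gb)$ is finite, so $\norm{\mvq-\mvq_{s-}}_1\le C|\gb-\gb_-|=O(1/N)$. Your primary approach---the direct contraction/perturbation decomposition $\mvq-\mvq_{s-}=[F_\gb(\mvq)-F_\gb(\mvq_{s-})]+[F_\gb(\mvq_{s-})-F_{\gb_-}(\mvq_{s-})]$---is a genuinely different organization of the same ingredients: it avoids inverting $I-DF_\gb$ and instead uses contractivity of $F_\gb$ in the $\gL^{1/2}$-weighted $\ell^2$ norm, which you correctly extract from $\|D_\psi\|\le 1$ and $\|\gL^{1/2}\gD^2\gL^{1/2}\|=\rho(\gD^2\gL)=\gb_c^{-2}$. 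This contraction version is arguably cleaner and more explicit than the paper's somewhat terse ``clearly $\nabla\mvq(\gb)<\infty$''; conversely, the paper's implicit-differentiation form packages the result as smoothness of $\gb\mapsto\mvq(\gb)$, which may be convenient if one later needs higher regularity. Either way the core computation (Gaussian integration by parts for the Jacobian, spectral bound via $\gb_c$) is identical.
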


\begin{proof}[Proof of Lemma~\ref{lem:close-q}]
     Consider the function $f_t(\gb, \mvq):=\E \tanh^2(\gb\eta \sqrt{(\gD^2\gL \mvq)_t}+h)$ for $t=1,2,\cdots,m$. Let $\mvq(\gb)=(q_1(\gb),q_2(\gb),\cdots, q_m(\gb))$ be defined via the following fixed point equations
     \[
     q_t(\gb) = \E \tanh^2(\gb \eta \sqrt{(\gD^2\gL \mvq(\gb))_t}+h) \quad \text{for} \ t=1,2,\cdots, m.
     \]
     Note that $\mvq_{s-}$ is obtained by perturbing the $\gb$ parameter in the equations, thus in order to bound the difference between $\mvq, \mvq_{s-}$, one just to bound the gradient $\nabla \mvq(\gb)$. It is easy to see that for each $t$,
     \[
     q_t'(\gb) = \frac{\partial f_t(\gb,\mvq(\gb))}{\partial \gb}+ \sum_{r=1}^m \frac{\partial f_t(\gb,\mvq(\gb))}{\partial q_r} \cdot q_r'(\gb).
     \]
     Written in the matrix form,
     \begin{align}
         \begin{pmatrix}
             1-\frac{\partial f_1(\gb,\mvq)}{\partial q_1} & \frac{\partial f_1(\gb,\mvq)}{\partial q_2} & \cdots & \frac{\partial f_1(\gb,\mvq)}{\partial q_m} \\
             \frac{\partial f_2(\gb,\mvq)}{\partial q_1} & 1-\frac{\partial f_2(\gb,\mvq)}{\partial q_2} & \cdots & \frac{\partial f_2(\gb,\mvq)}{\partial q_m} \\
             \vdots & \vdots & \vdots & \vdots \\
             \frac{\partial f_m(\gb,\mvq)}{\partial q_1} & \frac{\partial f_m(\gb,\mvq)}{\partial q_2} & \cdots & 1-\frac{\partial f_1(\gb,\mvq)}{\partial q_m} 
         \end{pmatrix}
         \begin{pmatrix}
             q_1'(\gb) \\
             q_2'(\gb) \\
             \vdots   \\
             q_m'(\gb)
         \end{pmatrix}
         =
         \begin{pmatrix}
             \frac{\partial f_1(\gb,\mvq)}{\partial \gb}\\
             \frac{\partial f_2(\gb,\mvq)}{\partial \gb} \\
             \vdots \\
             \frac{\partial f_m(\gb,\mvq)}{\partial \gb}
         \end{pmatrix}
     \end{align}
     By Gaussian integration by parts, one can compute 
     \[
     \frac{\partial f_t(\gb,\mvq)}{\partial \gb}= \gb (\gD^2 \gL \mvq)_t \E g''(\gb \eta \sqrt{(\gD^2\gL \mvq)_t}+h),
     \]
     \[
     \frac{\partial f_t(\gb,\mvq)}{\partial q_r} = \frac{\gb^2 \gD^2_{t,r}\gl_r}{2} \E g''(\gb \eta \sqrt{(\gD^2\gL \mvq)_t}+h),
     \]
     for $t,r=1,2,\cdots, m$, where we denote $g(x):=\tanh^2(x)$. Note that $g''(x) =( 2-4\sinh^2(x))/\cosh^4(x) \le 2$ and when $\gb<\gb_0$, clearly this implies that $\nabla \mvq(\gb)< \infty$. Using the relation $\gb,\gb_-$, it gives the desired bound on $\abs{\mvq - \mvq_{s-}}$.
     
\end{proof}

Now applying the Corollary~\ref{cor:2} w.r.t the Hamiltonian $H_{N-1}(\mvgs_{-s})$, we have 

\begin{align}\label{eq:key}
\E \left(\la \gs_i \ra  - \tanh\left( \frac{\gb}{\sqrt{N}} \sum_{t=1}^m\sum_{j\in I_t, j \neq i} g_{i,j} \la \gs_j \ra_{s-} + h\right)\right)^{2k} \le \frac{C}{N^k}.
\end{align}
Using the second inequality of Corollary~\ref{cor:2}, we have 
\[
\E \left( \frac{\gb_{-}}{\sqrt{N-1}} \sum_{t=1}^m \sum_{j \in I_t, j\neq i} g_{i,j} \la \gs_j \ra - \gb_{-}^2(\gD^2\gL(\mv1-\mvq_{s-}))_s - \frac{\gb_-}{\sqrt{N-1}} \sum_{t=1}^m\sum_{j\in I_t, j\neq i} g_{i,j} \la\gs_j \ra_{s-}\right)^{2k} \le \frac{C}{N^k}.
\]
By the Lemma~\ref{lem:close-q} for the closeness of $\gb$ and $\gb_{s-}$, $\mvq$ and $\mvq_{s-}$, it gives
\[
 \E\left(\frac{\gb}{\sqrt{N}}\sum_{t=1}^m \sum_{i\in I_t, j \neq i} g_{i,j} \la \gs_j \ra - \gb^2(\gD^2\gL(\mv1-\mvq))_s \la \gs_i \ra - \frac{\gb}{\sqrt{N}} \sum_{t=1}^m\sum_{j\in I_t, j\neq i} g_{i,j} \la\gs_j \ra_{s-} \right)^{2k} \le \frac{C}{N^k}.
\]
Using the following elementary fact, 
\[
\E (\tanh(X) - \tanh(Y))^{2k} \le \E(X- Y)^{2k} \le \frac{C}{N^k}, \ \text{for random variables} \ X, Y.
\]
This gives 
\begin{align*}
 & \E \Bigg( \tanh\bigg(\frac{\gb}{\sqrt{N}} \sum_{t=1}^m\sum_{j\in I_t, j \neq i} g_{i,j} \la \gs_j \ra +h - \gb^2(\gD^2\gL(\mv1-\mvq))_s \la \gs_i \ra \bigg)  \\
  &  \quad \qquad \qquad \qquad - \tanh\bigg(\frac{\gb}{\sqrt{N}} \sum_{t=1}^m\sum_{j\in I_t, j\neq i} g_{i,j} \la\gs_j \ra_{s-} \bigg) \Bigg)^{2k} \le \frac{C}{N^k}.
\end{align*}
Further by~\eqref{eq:key}, we have 
\[
\E\left(\la \gs_i\ra - \tanh\left( \frac{\gb}{\sqrt{N}}\sum_{t=1}^m\sum_{j\in I_t, j \neq i} g_{i,j} \la \gs_j \ra + h - \gb^2(\gD^2\gL(\mv1 - \mvq))_s \la \gs_i \ra \right) \right)^{2k} \le \frac{C}{N^{k}}.
\]

\bibliography{MSKTAP.bib} 

\end{document}